\newtheorem{theorem}{Theorem}[section]
\theoremstyle{definition}
\newtheorem{definition}[theorem]{Definition}
\newtheorem{example}[theorem]{Example}
\theoremstyle{remark}
\numberwithin{equation}{section}
\begin{document}
\title{Oscillation Criteria for Higher Order Coupled Systems}         
\author{B. V. K. Bharadwaj$^{1}$ and Pallav Kumar Baruah$^{2}$}        
\date{$^{1,2}$Department of Mathematics and Computer Science\\  
Sri Sathya Sai Institute of Higher Learning\\ Prasanthinilayam - 515134\\ INDIA.\\ $^{1}$e-mail: bvkbharadwaj@sssihl.edu.in\\$^{2}$e-mail:pkbaruah@sssihl.edu.in }
\maketitle
 {\large{\bf{Abstract}}}\\
In this paper we have considered higher order two dimensional coupled system of non-linear ordinary differential equations. We have given necessary and sufficient conditions on the non-linear functions such that the solutions pair oscillates.
\\
\\
 {\bf{\large{Key Words}}}: Non-linear Coupled Ordinary Differential Equations, oscillation. 
 \\
 \\
 {\bf{\large{2000 Mathematics Subject Classification:}}} 34A99, 34C11.
\section{Introduction}
Studying qualitative nature of solutions of differential equations is very useful when we expect solutions to have certain properties which have practical implications. One such property is oscillation. Lot of work was done on finding conditions under which a given differential equation is oscillating or otherwise. The study of oscillation and non-oscillation theory of second order ordinary differential equations has a very long history. The first theoretical results in this area
were actually obtained by Sturm himself in the abstract \cite{sturm} of his now classic 1836
memoir as a consequence of his memorable comparison theorems. Many papers have been written since then, on all aspects of this theory ranging from difference equations to partial differential equations and even integral equations.\\
We recall that a solution of a real ordinary differential equation is said to
be oscillatory on $[0,\infty)$ provided it has arbitrarily large zeros on 
$[0,\infty)$. By standard existence and uniqueness theorems we see that there
must be a sign change at a zero and zeros cannot accumulate on any finite interval.
If the equation is not oscillating then it is termed non-oscillatory. The question of interest here involves the determination
of a general criterion that will ensure the oscillation of a two-(or more)-term
non-linear ordinary differential equation of the form \eqref{simp osc system} where the non-linearity is
of a general type. 
After Atkinson \cite{Atkinson} many people worked on different types of conditions to prove the ability of a differential equation to oscillate. \cite{Butler, Belohorec, Kiguradze, Wong} are some notable works in this field. Few authors worked on systems of differential equations, notable among these is Mirzov \cite{Mirzov1}, \cite{Mirzov2}. In \cite{Mirzov2}, Mirzov studied the following system
\begin{eqnarray}
u_{1}^{'} &=& f_{1}(t, u_{1}, u_{2}), \nonumber \\
u_{2}^{'} &=& f_{2}(t, u_{1}, u_{2})
\label{mirzovsystem for simp osc system}
\end{eqnarray}
and gave sufficient, and necessary and sufficient conditions for the proper solutions of \eqref{mirzovsystem for simp osc system} to be able to oscillate. We say that a proper solution of a coupled system is an oscillating solution if both $u_{1}$ and $u_{2}$ are oscillating. If one of them is oscillating then we call the solution pair to be weakly oscillating and when none of them are oscillating then the solution pair is called non-oscillating. The same definition is used for all coupled systems studied in this article. 
\\
Lot of work is done in providing conditions to find non oscillating solutions of systems and equations. We refer to articles like \cite{BVKPKB2,BVKPKB3,BVKPKB4, Philos, Dube, wahlen, graef, henderson}. In most of these article authors proved conditions for non oscillation or positive solutions for either lower order systems which are fully coupled or higher order systems which are weakly coupled. For example, Graef et al \cite{graef} considered the following second order system which is fully coupled.
\begin{eqnarray}
-u_{1}^{''} = \lambda_{1}a_{1}(t)f_{1}(u,v) \ \ \ \ t \in (0,1),
\nonumber
\\
-v_{1}^{''} = \lambda_{2}a_{2}(t)f_{2}(u,v) \ \ \ \ t \in (0,1).
\end{eqnarray}
Whereas Henderson et al \cite{henderson} considered the following higher order system.
\begin{eqnarray}
u^{(n)}(t) + \lambda a(t)f(v) = 0 \ \ \ \ t \in (0,1),
\nonumber
\\
v^{(n)}(t) + \lambda b(t)g(u) = 0 \ \ \ \ t \in (0,1).
\end{eqnarray}
In this article we consider the following system
\begin{eqnarray}
x^{(n_{1})}_{1} &=& f_{1}(t, x_{2}), \nonumber
\\
x^{(n_{2})}_{2} &=& f_{2}(t, x_{1}),
\label{simp osc system}
\end{eqnarray}
where the functions $f_{1}, f_{2}$ satisfy the Caratheodory conditions in each finite parallelepiped of the domain
\begin{equation}
D = \left\{\left(t, x_{i}\right): 0 \leq t < \infty, -\infty < x_{i} < \infty\right\}.
\label{domainD for simp osc system}
\end{equation}
The question concerning the ability of proper solutions of \eqref{simp osc system} to oscillate is addressed by using a fixed point technique, unlike the way it is done in \cite{Mirzov2}. 

\section{Mathematical Preliminaries}
\begin{definition}
A pair of real valued continuous functions $y_{1}(t), y_{2}(t)$ are called solutions of \eqref{simp osc system} if they satisfy both the equations in \eqref{simp osc system} simultaneously on the interval $[t_{0}, \infty)$. 
\end{definition}
\begin{definition}
A non trivial solution $x_{1}(t), x_{2}(t)$ of \eqref{simp osc system} defined on some interval $\left[t_{0}, \infty\right)$ is said to be a proper solution if 
\begin{equation}
sup\left\{\left|x_{1}(\tau)\right| + \left|x_{2}(\tau)\right| : \tau \geq t\right\} > 0 
\label{propersolution}
\end{equation}
for arbitrary $t \in \left[t_{0}, \infty\right)$
\end{definition}
\begin{definition}
We say that a proper solution of \eqref{simp osc system} is an oscillating solution if both $x_{1}(t)$ and $x_{2}(t)$ have a sequence of zeros converging to $\infty$. But if a $t_{0} > 0$ can be found such that both $x_{1}(t)$ and $x_{2}(t)$ are different from zero on $\left[t_{0}, \infty\right)$, we say that the solutions $x_{1}(t)$ and $x_{2}(t)$ are non oscillating. Also, a proper solution of \eqref{simp osc system} is weakly oscillating if at least $x_{1}(t)$ or $x_{2}(t)$ have a sequence of zeros converging to $\infty$.
\end{definition}
\textbf{Schauder's theorem:}\\
 Let $E$ be a Banach space and $X$ any nonempty convex and closed subset of $E$. If $S$ is a continuous mapping of $X$ into itself and $SX$ is relatively compact, then the mapping $S$ has at least one fixed point (i.e. there exists an $x \in X$ with $x = Sx$).
\\
Let $E = B\left(\left[0, \infty\right)\right)$, where $B\left(\left[0, \infty\right)\right)$ is the Banach space of all continuous and bounded real valued functions on the interval $\left[0, \infty\right]$, endowed with the sup-norm $\left\|.\right\|$:
\\
$\left\|h\right\| = sup_{t \geq 0} \left|h(t)\right|$ for $h \in B\left(\left[0, \infty\right)\right)$
\\
We need the following compactness criterion for subsets of $B\left(\left[0, \infty\right)\right)$  which is a corollary of the Arzela-Ascoli theorem. This compactness criterion is an adaptation of a lemma due to Avramescu \cite{Avramescu}.
\\
\textbf{Compactness criterion:} 
\\
Let $H$ be an equicontinuous and uniformly bounded subset of the
Banach space $B\left(\left[0, \infty\right)\right)$. If $H$ is equiconvergent at $\infty$, it is also relatively compact.\\
Note that a set $H$ of real-valued functions defined on the interval $\left[0,\infty\right)$ is called equiconvergent at $\infty$ if all functions in $H$ are convergent in $R$ at the point $\infty$ and, in addition, for every $\epsilon>0$ there exists a $T \geq 0$ such that, for all functions $h \in H$, it holds
$\left|h(t) - lim_{s \rightarrow \infty}h(s)\right| < \epsilon$ for all $t \geq T	$.
\\

\section{Main Results}
\begin{theorem}\label{theorem1 for simp osc system}
Suppose that in the domain $D$ the inequalities 
\begin{equation}
\left(-1\right)^{i-1}f_{i}\left(t, x_{3-i}\right) \cdot sign x_{3-i} \geq a_{i}\left(t\right)\left|x_{3-i}\right|^{\lambda_{i}}
\label{condition1 for simp osc system}
\end{equation}
for $i = 1, 2$ are satisfied, where $\lambda_{i} > 0$ for $i = 1, 2$ and $\lambda_{1}\cdot \lambda_{2} > 1$. The functions $a_{i}\left(t\right) \geq 0$ for $i = 1, 2$ are summable on each finite segment of $\left[0, \infty\right)$ and for some $k \in \left\{1, 2\right\}$, we have \\
\begin{equation}
\int_{0}^{\infty}\frac{t^{n_{k}-1}}{\left(n_{k}-1\right)!}a_{k}\left(t\right)dt = \infty.
\label{condition3 for simp osc system}
\end{equation}
Then for all the proper solutions of the system \eqref{simp osc system} to oscillate, it is sufficient that 
\begin{equation}
\int_{0}^{\infty}\frac{t^{n_{3-k}-1}}{\left(n_{3-k}-1\right)!}a_{3-k}\left(t\right)dt = \infty,
\label{condition4 for simp osc system}
\end{equation}
or else,
\begin{equation}
\int_{0}^{\infty}\frac{t^{n_{3-k}-1}}{\left(n_{3-k}-1\right)!}a_{3-k}\left(t\right)dt < \infty
\label{condition5 for simp osc system}
\end{equation}
and
\begin{equation}
\int_{0}^{\infty}\frac{t^{n_{k}-1}}{\left(n_{k}-1\right)!}a_{k}\left(t\right)\left[\int_{t}^{\infty}\frac{\left(\tau-t\right)^{n_{3-k}-1}}{\left(n_{3-k}-1\right)!}a_{3-k}\left(\tau\right)d\tau\right]^{\lambda_{k}}dt = \infty.
\label{condition6 for simp osc system}
\end{equation}
\end{theorem}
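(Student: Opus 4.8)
The plan is to argue by contradiction. I assume that \eqref{condition3 for simp osc system} holds together with one of the two alternatives --- either \eqref{condition4 for simp osc system}, or \eqref{condition5 for simp osc system} and \eqref{condition6 for simp osc system} --- and that nevertheless \eqref{simp osc system} possesses a proper solution $(x_1,x_2)$ that is not oscillating as a pair. First I would rule out the weakly oscillating case. If $x_1$ is eventually of one sign, say $x_1>0$ on $[t_0,\infty)$, then \eqref{condition1 for simp osc system} with $i=2$ gives $x_2^{(n_2)}=f_2(t,x_1)\le -a_2(t)x_1^{\lambda_2}\le 0$; since a function whose highest derivative is eventually of one sign has each lower-order derivative eventually monotone, $x_2$ is itself eventually of one sign. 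The reverse implication holds symmetrically, so nonoscillation of one component propagates to the other and no weakly oscillating proper solution can exist. It therefore suffices to exclude solutions for which both components are eventually of constant sign. Because \eqref{condition1 for simp osc system} is expressed through $\operatorname{sign}x_{3-i}$ and $|x_{3-i}|$, replacing $x_i$ by $|x_i|$ reduces every sign pattern to two positive functions $y_1,y_2$ on some $[t_0,\infty)$ obeying (after relabelling, the opposite scenario being identical with $1,2$ interchanged)
\[
y_1^{(n_1)}(t)\ge a_1(t)\,y_2(t)^{\lambda_1}\ge 0,\qquad y_2^{(n_2)}(t)\le -a_2(t)\,y_1(t)^{\lambda_2}\le 0 .
\]

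The next step is a Kiguradze-type analysis of these two positive functions. Exactly one of them, say the component whose top derivative is nonnegative, lies in a ``growth'' class (its lower derivatives are eventually positive, so it is nondecreasing and bounded below by a positive constant), while the other lies in a ``concave'' class with nonpositive top derivative. Expanding the growth component by Taylor's formula at $t_0$ and integrating the concave component $n$ times from $t$ to $\infty$ (valid because the relevant iterated tail integrals converge once positivity is imposed), I obtain, with $k$ denoting the growth index,
\[
y_k(t)\ge\int_{t_0}^{t}\frac{(t-s)^{n_k-1}}{(n_k-1)!}\,a_k(s)\,y_{3-k}(s)^{\lambda_k}\,ds,\qquad
y_{3-k}(t)\ge\int_{t}^{\infty}\frac{(\tau-t)^{n_{3-k}-1}}{(n_{3-k}-1)!}\,a_{3-k}(\tau)\,y_k(\tau)^{\lambda_{3-k}}\,d\tau .
\]

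I would then combine the two. Using that $y_k$ is nondecreasing, so $y_k(\tau)\ge y_k(t)$ in the tail integral, the second inequality yields $y_{3-k}(t)\ge y_k(t)^{\lambda_{3-k}}W(t)$, and substituting this into the first differential inequality produces the scalar superlinear relation
\[
y_k^{(n_k)}(t)\ge a_k(t)\,W(t)^{\lambda_k}\,y_k(t)^{\lambda_1\lambda_2},\qquad W(t):=\int_{t}^{\infty}\frac{(\tau-t)^{n_{3-k}-1}}{(n_{3-k}-1)!}\,a_{3-k}(\tau)\,d\tau .
\]
Under \eqref{condition4 for simp osc system} the tail kernel is asymptotically comparable to the kernel in that hypothesis, so $W\equiv\infty$ and the lower bound for $y_{3-k}$ is already impossible; this disposes of the first alternative. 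Under \eqref{condition5 for simp osc system}--\eqref{condition6 for simp osc system} the quantity $W$ is finite, and here the hypothesis $\lambda_1\lambda_2>1$ is decisive: dividing the scalar inequality by $y_k^{\lambda_1\lambda_2}$ and integrating against $\tfrac{t^{n_k-1}}{(n_k-1)!}$, the exponent $\lambda_1\lambda_2>1$ makes the left-hand side converge (the primitive of $y_k'/y_k^{\lambda_1\lambda_2}$ stays bounded even as $y_k\to\infty$), whereas \eqref{condition6 for simp osc system} forces $\int_0^\infty \tfrac{t^{n_k-1}}{(n_k-1)!}a_k W^{\lambda_k}\,dt=\infty$ on the right, a contradiction. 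The complementary possibility, that the distinguished index $k$ is instead the concave one, is closed in the same way: positivity of $y_k$ bounds a weighted integral of $a_k$, contradicting \eqref{condition3 for simp osc system}.

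The hard part is not the final comparison but the middle steps that feed it. The genuine obstacle is the Kiguradze-type bookkeeping: classifying the eventual sign pattern of all derivatives of $y_1,y_2$ in each of the finitely many admissible classes, excluding or absorbing the ``decaying'' class in which a positive component tends to $0$ and no uniform positive lower bound is available, and --- most delicately --- matching the weighted integrals $\tfrac{t^{n-1}}{(n-1)!}a$ appearing in \eqref{condition3 for simp osc system}--\eqref{condition6 for simp osc system} with the unweighted finiteness that positivity of the concave component directly supplies, together with the higher-order (Bihari-type, integration-by-parts) version of the superlinear estimate when $n_k>1$. Schauder's theorem and the compactness criterion recalled above appear to be designed for the companion existence/necessity statement rather than for this sufficiency direction, so I would not expect to need them here.
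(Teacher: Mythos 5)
Your route is, in outline, the paper's route: argue by contradiction; dispose of weak oscillation separately (your propagation argument --- non-oscillation of one component forces the top derivative, hence eventually the sign, of the other to stabilize --- is a cleaner variant of the paper's device of integrating \eqref{condition1 for simp osc system} between two consecutive zeros of the oscillating component); reduce the non-oscillating case to two eventually positive components, one of ``growth'' type and one of ``concave'' type; derive the forward and tail integral inequalities; and finish with the superlinear comparison in which one divides by $y_k^{\lambda_1\lambda_2}$, integrates against the weight $t^{n_k-1}/(n_k-1)!$, and plays $\lambda_1\lambda_2>1$ on the left against \eqref{condition6 for simp osc system} on the right --- which is exactly the paper's concluding computation.

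The genuine gap is that the steps you yourself label ``the hard part'' are the actual content of the proof, and they are not carried out; worse, one of them is false as you state it. (i) Every terminal contradiction in your sketch --- the $W\equiv\infty$ argument under \eqref{condition4 for simp osc system}, the superlinear comparison under \eqref{condition5 for simp osc system}--\eqref{condition6 for simp osc system}, and the \eqref{condition3 for simp osc system} case --- requires the growth component to satisfy $y\ge c>0$ and $y(\tau)\ge y(t)$ for $\tau\ge t$. A positive function with $y^{(n)}\ge 0$ need not be of this kind: when $n$ is even the Kiguradze class $l=0$ is available ($y>0$, $y'<0$, alternating higher derivatives, possibly $y\to 0$, e.g.\ $y(t)=e^{-t}$), and nothing in your sketch excludes it; if both components decay, all of your lower bounds evaporate. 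The paper slides over the very same point (its inference ``$x_i$ is monotone and therefore $|x_i(t)|\ge|x_i(t_0)|$'' is a non sequitur precisely in the decaying case), so you have correctly located the crux, but locating it is not resolving it. (ii) In the swapped scenario you assert that ``positivity of $y_k$ bounds a weighted integral of $a_k$, contradicting \eqref{condition3 for simp osc system}.'' This inference fails: a positive function with $y_k^{(n_k)}\le 0$ need not satisfy $\int^\infty s^{n_k-1}\left(-y_k^{(n_k)}(s)\right)ds<\infty$. Take $y_k(t)=\sqrt{t}$ with $n_k=2$: then $\int^\infty s\left(-y_k''(s)\right)ds=\tfrac14\int^\infty s^{-1/2}ds=\infty$, so \eqref{condition3 for simp osc system} is perfectly compatible with positivity and concavity of $y_k$ alone. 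To get the contradiction one must feed back the growth of the other component and run a bootstrap (the paper does this only formally, via the asserted bound $|x_2(t)|\ge|x_2(t_0)|$ and a polynomial-versus-divergent-integral comparison that itself needs justification, since \eqref{condition3 for simp osc system} does not force the integral term to outgrow the degree-$(n_k-1)$ Taylor polynomial). (iii) The weighted, higher-order integration by parts that makes the superlinear estimate work when $n_k>1$ is likewise deferred. Since every branch of your argument terminates in a step from (i)--(iii), what you have is an accurate map of where the difficulty lives --- indeed of where the paper's own proof is weakest --- but not a proof.
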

\begin{proof}
We prove the case where $k = 1$. The case where $k = 2$ is completely analogous.\\
To show that all solutions oscillate we need to show that non-oscillating and weakly oscillating solutions pairs do not exist. 
\\
Now, let us assume, that \eqref{simp osc system} has a non-oscillating proper solution pair $x_{1}\left(t\right), x_{2}\left(t\right)$. From the definition of non-oscillation, $\exists$ a large $t_{0}$ such that 
\begin{equation}
x_{1}\left(t\right)\cdot x_{2}\left(t\right) \neq 0 
\label{neq for simp osc system}
\end{equation}
for all $t \geq t_{0}$.
\\
This means $sign \ x_{i}\left(t\right)$ is constant for all $t \geq t_{0}$ follows from \eqref{neq for simp osc system}.
\\
We know from \eqref{simp osc system} that 
\begin{equation}
x_{2}^{\left(n_{2}\right)}\left(t\right) = f_{2}\left(t,x_{1}\left(t\right)\right).
\nonumber
\end{equation}
Integrating the above equation for $n_{2}-1$ times from $t_{0}$ to $t$, we get 
\begin{equation}
x_{2}^{'}(t) - k(t) = \int_{t_{0}}^{t}\frac{(s-t_{0})^{n_{2}-2}}{(n_{2}-2)!} f_{2}\left(s,x_{1}\left(s\right)\right)ds,
\label{xmonotone2 for simp osc system}
\end{equation}
where $k(t) = \sum_{j=0}^{n_{2}-2}x_{2}^{(j+1)}(t_{0})\frac{(t-t_{0})^{j}}{j!}$, is a $(n_{2}-2)$th degree polynomial in $t$. We know for large $t$, beyond the last positive real root, say $\widehat{t}$, of this polynomial, the sign of this polynomial equals the sign of the coefficient  of the highest degree term which is $x_{2}^{n_{2}-1}(t_0{})$ in this case. Therefore by choosing $t_{0}$ beyond $\widehat{t}$, sign of $k(t)$ is constant.
\\
Now from \eqref{condition1 for simp osc system} we have that $sign \ f_{2}\left(t,x_{1}\left(t\right)\right) = \neg \left[sign \ x_{1}\left(t\right)\right] = constant$ for all $t \geq t_{0}$. Therefore the integrand of \eqref{xmonotone2 for simp osc system} doesn't change sign for all  $t \geq t_{0}$. In fact $\int_{t_{0}}^{t}\frac{(s-t_{0})^{n_{2}-1-\rho}}{(n_{2}-1-\rho)!} f_{2}\left(s,x_{1}\left(s\right)\right)ds$, $\rho = 1,2,..,n_{2}-1$, all have same sign for every $t \geq t_{0}$.  We now know that $x_{2}^{'}(t) - k(t)$ doesn't change sign for all $t \geq t_{0}$ and it has the same sign as $\int_{t_{0}}^{t}\frac{(s-t_{0})^{n_{2}-2}}{(n_{2}-2)!} f_{2}\left(s,x_{1}\left(s\right)\right)ds$, which in turn tells us that $x_{2}^{'}(t) - k(t)$ has the same sign as $x_{2}^{(n_{2}-1)}(t) - x_{2}^{(n_{2}-1)}(t_{0}) = \int_{t_{0}}^{t}f_{2}\left(s,x_{1}\left(s\right)\right)ds$.
\\
Since $sign \ f_{2}\left(t,x_{1}\left(t\right)\right)$ is constant, $sign \ x_{2}^{(n_{2})}\left(t\right)$ is constant. Therefore $ x_{2}^{(n_{2}-1)}\left(t\right)$ is monotone. 
\\
Whenever $ x_{2}^{(n_{2}-1)}\left(t_{0}\right) \geq 0$, we have,  $x_{2}^{(n_{2}-1)}\left(t\right) \geq x_{2}^{(n_{2}-1)}\left(t_{0}\right) \geq 0$, which implies $x_{2}^{(n_{2}-1)}\left(t\right) - x_{2}^{(n_{2}-1)}\left(t_{0}\right) \geq 0$.
\\
Similarly, whenever $ x_{2}^{(n_{2}-1)}\left(t_{0}\right) \leq 0$, we have,  $x_{2}^{(n_{2}-1)}\left(t\right) \leq x_{2}^{(n_{2}-1)}\left(t_{0}\right) \leq 0$, which implies $x_{2}^{(n_{2}-1)}\left(t\right) - x_{2}^{(n_{2}-1)}\left(t_{0}\right) \leq 0$.
\\
This means $x_{2}^{(n_{2}-1)}\left(t\right) - x_{2}^{(n_{2}-1)}\left(t_{0}\right)$ has the same sign as $ x_{2}^{(n_{2}-1)}\left(t_{0}\right)$. Summarizing the arguments here we have, 
$$sign \ (x_{2}^{'}(t) - k(t)) = sign \ (x_{2}^{(n_{2}-1)}\left(t\right) - x_{2}^{(n_{2}-1)}\left(t_{0}\right)) = sign \  x_{2}^{(n_{2}-1)}\left(t_{0}\right) = sign \ k(t).$$
Therefore $sign \ (x_{2}^{'}(t) - k(t)) = sign \ k(t)$ which is constant for all $t \geq t_{0}$. So, whenever $k(t) \geq 0$, we have $ x_{2}^{'}(t) \geq k(t)$ and whenever $k(t) \leq 0$, we have $ x_{2}^{'}(t) \leq k(t)$.
\\
 Therefore we conclude that $x_{2}^{'}(t)$ doesn't change sign for all $t \geq t_{0}$. This means $x_{2}(t)$ is monotone for all $t \geq t_{0}$ and therefore $\left|x_{2}(t)\right| \geq \left|x_{2}(t_{0})\right|$ for all $t \geq t_{0}$.
\\
  We know from \eqref{simp osc system} that 
  \begin{equation}
    x_{1}^{\left(n_{1}\right)}\left(t\right) = f_{1}\left(t,x_{2}\left(t\right)\right).
    \nonumber
    \end{equation}
Integrating the above equation for $n_{1}-1$ times from $t_{0}$ to $t$, we get 
\begin{equation}
x_{1}^{'}(t)- s(t) = \int_{t_{0}}^{t}\frac{(s-t_{0})^{n_{1}-2}}{(n_{1}-2)!} f_{1}\left(s,x_{2}\left(s\right)\right)ds,
\label{xmonotone1 for simp osc system}
\end{equation}
where $s(t) = \sum_{j=0}^{n_{1}-2}x_{1}^{(j+1)}(t_{0})\frac{(t-t_{0})^{j}}{j!}$, is a $(n_{1}-2)$th degree polynomial in $t$. We know for large $t$, beyond the last positive real root, say $\widehat{t}$, of this polynomial, the sign of this polynomial equals the sign of the coefficient  of the highest degree term which is $x_{1}^{n_{1}-1}(t_0{})$ in this case. Therefore by choosing $t_{0}$ beyond $\widehat{t}$, sign of $s(t)$ is constant.
\\
Now from \eqref{condition1 for simp osc system} we have that $sign \ f_{1}\left(t,x_{2}\left(t\right)\right) = sign \ x_{2}\left(t\right) = constant$ for all $t \geq t_{0}$. Therefore the integrand of \eqref{xmonotone1 for simp osc system} doesn't change sign for all  $t \geq t_{0}$. In fact $\int_{t_{0}}^{t}\frac{(s-t_{0})^{n_{1}-1-\rho}}{(n_{1}-1-\rho)!} f_{1}\left(s,x_{2}\left(s\right)\right)ds$, $\rho = 1,2,..,n_{1}-1$, all have same sign for every $t \geq t_{0}$.  We now know that $x_{1}^{'}(t) - s(t)$ doesn't change sign for all $t \geq t_{0}$ and it has the same sign as $\int_{t_{0}}^{t}\frac{(s-t_{0})^{n_{1}-2}}{(n_{1}-2)!} f_{1}\left(s,x_{2}\left(s\right)\right)ds$, which in turn tells us that $x_{1}^{'}(t) - s(t)$ has the same sign as $x_{1}^{(n_{1}-1)}(t) - x_{1}^{(n_{1}-1)}(t_{0}) = \int_{t_{0}}^{t}f_{1}\left(s,x_{2}\left(s\right)\right)ds$.
\\
Since $sign \ f_{1}\left(t,x_{2}\left(t\right)\right)$ is constant, $sign \ x_{1}^{(n_{1})}\left(t\right)$ is constant. Therefore $ x_{1}^{(n_{1}-1)}\left(t\right)$ is monotone. 
\\
Whenever $ x_{1}^{(n_{1}-1)}\left(t_{0}\right) \geq 0$, we have,  $x_{1}^{(n_{1}-1)}\left(t\right) \geq x_{1}^{(n_{1}-1)}\left(t_{0}\right) \geq 0$, which implies $x_{1}^{(n_{1}-1)}\left(t\right) - x_{1}^{(n_{1}-1)}\left(t_{0}\right) \geq 0$.
\\
Similarly, whenever $ x_{1}^{(n_{1}-1)}\left(t_{0}\right) \leq 0$, we have,  $x_{1}^{(n_{1}-1)}\left(t\right) \leq x_{1}^{(n_{1}-1)}\left(t_{0}\right) \leq 0$, which implies $x_{1}^{(n_{1}-1)}\left(t\right) - x_{1}^{(n_{1}-1)}\left(t_{0}\right) \leq 0$.
\\
This means $x_{1}^{(n_{1}-1)}\left(t\right) - x_{1}^{(n_{1}-1)}\left(t_{0}\right)$ has the same sign as $ x_{1}^{(n_{1}-1)}\left(t_{0}\right)$. Summarizing the arguments here we have 
$$sign \ (x_{1}^{'}(t) - s(t)) = sign \ (x_{1}^{(n_{1}-1)}\left(t\right) - x_{1}^{(n_{1}-1)}\left(t_{0}\right)) = sign \  x_{1}^{(n_{1}-1)}\left(t_{0}\right) = sign \ s(t).$$
Therefore $sign \ (x_{1}^{'}(t) - s(t)) = sign \ s(t)$ which is constant for all $t \geq t_{0}$. So, whenever $s(t) \geq 0$, we have $ x_{1}^{'}(t) \geq s(t)$ and whenever $s(t) \leq 0$, we have $ x_{1}^{'}(t) \leq s(t)$.
\\
 Therefore we conclude that $x_{1}^{'}(t)$ doesn't change sign for all $t \geq t_{0}$ and therefore $x_{1}(t)$ is monotone. This means $\left|x_{1}(t)\right| \geq \left|x_{1}(t_{0})\right|$ for all $t \geq t_{0}$.
 \\
From the above arguments we conclude for $i = 1,2$
\begin{equation}
\left|x_{i}\left(t_{1}\right)\right|\geq \left|x_{i}\left(t_{2}\right)\right|.
\label{x2 > x2to for simple osc system}
\end{equation}
for any $t_{1} \geq t_{2} \geq t_{0}$.
\\
Now we assume that 
\begin{equation}
x_{1}\left(t\right)\cdot x_{2}\left(t\right) < 0
\nonumber
\end{equation}
for $t \geq t_{0}$.
\\
According to \eqref{condition1 for simp osc system}
\begin{equation}
\left(-1\right)^{i-1}x_{i}^{(n_{i})}\left(t\right)\ sign\  x_{3-i}\left(t\right) \geq a_{i}\left(t\right)\left|x_{3-i}\left(t\right)\right|^{\lambda_{i}}
\label{inequality1 for simp osc system}
\end{equation}
for $t \geq t_{0}$ and $i = 1, 2$.
\\

Now, setting $i = 1$ in \eqref{inequality1 for simp osc system}, we get
\begin{equation}
x_{1}^{(n_{1})}\left(t\right)\cdot sign \ x_{2}\left(t\right) \geq a_{1}\left(t\right)\left|x_{2}\left(t\right)\right|^{\lambda_{1}}.  
\nonumber
\end{equation}
Now, integrating the above inequality from $t_{0}$ to $t$ for $n_{1}$ times and using \eqref{x2 > x2to for simple osc system} we get,
\begin{equation}
\left|x_{1}\left(t\right)\right| \leq \left|x_{1}\left(t_{0}\right)\right| + s_{1}\left(t\right) sign \ x_{1}\left(t\right) - \left|x_{2}\left(t_{0}\right)\right|^{\lambda_{1}}\int_{t_{0}}^{t}\frac{\left(\tau-t_{0}\right)^{n_{1}-1}}{\left(n_{1}-1\right)!}a_{1}\left(\tau\right)d\tau,
\nonumber
\end{equation}
where $s_{1}\left(t\right)$ is  the $(n_{1}-1)^{th}$ degree polynomial in $t$ obtained after series of integrations on \eqref{inequality1 for simp osc system}, for $i = 1$ from $t_{0}$ to $t$, for $n_{1}$ times.  \\
Since $x_{1}$ is a solution of the system, it is bounded in the domain $D$, meaning every term on the right hand side, including the integral, is bounded. This contradicts \eqref{condition3 for simp osc system}.
\\
Now, setting $i = 2$ in \eqref{inequality1 for simp osc system} and integrating it $n_{2}$ times from $t_{0}$ to $t$ and using the fact that $sign \ x_{1}(t) = sign \ x_{2}(t)$, we get 
\begin{equation}
\left|x_{2}\left(t\right)\right| - \left|x_{2}\left(t_{0}\right)\right| - k_{1}\left(t\right)sign \ x_{2}(t) \leq -\int_{t_{0}}^{t}\frac{\left(\tau-t_{0}\right)^{n_{2}-1}}{\left(n_{2}-1\right)!}a_{2}\left(\tau\right)\left|x_{1}\left(\tau\right)\right|^{\lambda_{2}}d\tau,
\nonumber
\end{equation}
where $k_{1}\left(t\right)$ is  the $(n_{2}-1)^{th}$ degree polynomial in $t$ obtained after series of integrations. Now using the fact $\left|x_{1}\left(t\right)\right| \geq \left|x_{1}\left(t_{0}\right)\right|$ we see that,
\begin{equation}
\left|x_{2}\left(t\right)\right| \leq \left|x_{2}\left(t_{0}\right)\right| + k_{1}\left(t\right)sign \ x_{2}(t) - \left|x_{1}\left(t_{0}\right)\right|^{\lambda_{2}}\int_{t_{0}}^{t}\frac{\left(\tau-t_{0}\right)^{n_{2}-1}}{\left(n_{2}-1\right)!}a_{2}\left(\tau\right)d\tau
\label{x2final for simp osc system}
\end{equation}
for $t \geq t_{0}$. 
\\
Since $x_{2}$ is a solution of the system, it is bounded in the domain $D$, meaning every term on the right hand side, including the integral, is bounded. This contradicts \eqref{condition4 for simp osc system} in view of the underlying bounded assumption on $x_{i}$.\\
It remains for us to consider the case where \eqref{condition4 for simp osc system} and  \eqref{condition6 for simp osc system} are satisfied. 
\\
Now by integrating \eqref{inequality1 for simp osc system}
 for $i = 2$ from $t$ to $M$ for some large $M > t$, $n_{2}$ times, we get 
 \begin{equation}
\left|x_{2}\left(t\right)\right| \geq  \left|x_{2}\left(M\right)\right| - \widehat{k}\left(t\right)sign \ x_{2}(t) + \int_{t}^{M}\frac{\left(\tau-t_{0}\right)^{n_{2}-1}}{\left(n_{2}-1\right)!}a_{2}\left(\tau\right)\left|x_{1}\left(\tau\right)\right|^{\lambda_{2}}d\tau,
\nonumber
\end{equation}
where $\widehat{k}\left(t\right)$ is  the $(n_{2}-1)^{th}$ degree polynomial in $t$ obtained after series of integrations. Since $x_{2}$ is monotone, we know that $\left|x_{2}\left(M\right)\right| \geq  \left|x_{2}\left(t\right)\right|$ and as $M \rightarrow \infty$, we safely conclude that 
 \begin{equation}
\left|x_{2}\left(t\right)\right| \geq \int_{t}^{\infty}\frac{\left(\tau-t\right)^{n_{2}-1}}{\left(n_{2}-1\right)!}a_{2}\left(\tau\right)\left|x_{1}\left(\tau\right)\right|^{\lambda_{2}}d\tau 
\nonumber
\end{equation}
Now using the fact that $x_{1}(t)$ is monotone for $t \geq t_{0}$, we obtain
\begin{eqnarray}
\left|x_{2}\left(t\right)\right| \geq \int_{t}^{\infty}\frac{\left(\tau-t\right)^{n_{2}-1}}{\left(n_{2}-1\right)!}a_{2}\left(\tau\right)\left|x_{1}\left(\tau\right)\right|^{\lambda_{2}}d\tau \geq \left|x_{1}\left(t\right)\right|^{\lambda_{2}}\int_{t}^{\infty}\frac{\left(\tau-t\right)^{n_{2}-1}}{\left(n_{2}-1\right)!}a_{2}\left(\tau\right)d\tau.
\nonumber
\\
\label{LB x2 simp osc system}
\end{eqnarray}
We know from \eqref{inequality1 for simp osc system}, for $i = 1$, we get, 
\begin{equation}
x_{1}^{(n_{1})}\left(t\right)sign \ x_{1}(t) \geq a_{1}\left(t\right)\left|x_{2}(t)\right|^{\lambda_1}.
\nonumber
\end{equation}
Using the lower bound for $\left|x_{2}(t)\right|$ from \eqref{LB x2 simp osc system} in the above inequality we get,
\begin{equation}
\left|x_{1}\left(t\right)\right|^{-\lambda_{1}\lambda_{2}}x_{1}^{(n_{1})}\left(t\right)sign \ x_{1}(t) \geq a_{1}\left(t\right)\left[\int_{t}^{\infty}\frac{\left(\tau-t\right)^{n_{2}-1}}{\left(n_{2}-1\right)!}a_{2}\left(\tau\right)d\tau\right]^{\lambda_{1}}.
\label{penultimateineq}
\end{equation}

From the above inequality \eqref{penultimateineq}, after series of integrations for $n_{1}$ times, from $t_{0}$ to $\infty$ and using the fact that $\lambda_{1} \cdot \lambda_{2} > 1$ we get,
\begin{equation}
\int_{t_{0}}^{\infty}\frac{\left(t-t_{0}\right)^{n_{1}-1}}{\left(n_{1}-1\right)!}a_{1}\left(t\right)\left[\int_{t}^{\infty}\frac{\left(\tau-t\right)^{n_{2}-1}}{\left(n_{2}-1\right)!}a_{2}\left(\tau\right)d\tau\right]^{\lambda_{1}}dt
\nonumber
\end{equation}
\begin{eqnarray}
&\leq& \int_{t_{0}}^{\infty}
\frac{\left(t-t_{0}\right)^{n_{1}-1}}{\left(n_{1}-1\right)!}\left|x_{1}\left(t\right)\right|^{-\lambda_{1}\lambda_{2}}x_{1}^{(n_{1})}\left(t\right)sign \ x_{1}(t)dt
\nonumber
\\
&=& \left[\left|x_{1}\left(t\right)\right|^{-\lambda_{1}\lambda_{2}}\left|x_{1}\left(t\right)\right|\right]_{t_{0}}^{\infty} + \lambda_{1}\lambda_{2} \int_{t_{0}}^{\infty}\left|x_{1}\left(t\right)\right|^{-\lambda_{1}\lambda_{2}-1}\left|x_{1}\left(t\right)\right|^{'} \cdot \left|x_{1}\left(t\right)\right|dt 
\nonumber
\\
&=& \left[\frac{\lambda_{1}\lambda_{2}}{\lambda_{1}\lambda_{2}-1} - 1\right]\left|x_{1}\left(t_{0}\right)\right|^{1-\lambda_{1}\lambda_{2}} - \left|x_{1}\left(\infty\right)\right|^{1-\lambda_{1}\lambda_{2}}\left[\frac{\lambda_{1}\lambda_{2}}{\lambda_{1}\lambda_{2}-1} - 1\right]
\nonumber
\\
& < & \left[\frac{\lambda_{1}\lambda_{2}}{\lambda_{1}\lambda_{2}-1} - 1\right]\left|x_{1}\left(t_{0}\right)\right|^{1-\lambda_{1}\lambda_{2}}.
\nonumber
\end{eqnarray}
This happens after taking in to account $\lambda_{1} \cdot \lambda_{2} > 1$ and this contradicts \eqref{condition6 for simp osc system}. Therefore there cannot exist a non-oscillating solution. 
\\
We now claim that a weakly oscillating solution pair is an oscillating pair. Because if there is a weakly oscillating pair, it means $\exists$ a solutions pair $x_{1}, x_{2}$ such that $x_{1}$ is oscillating and $x_{2}$ is non-oscillating. Therefore $\exists$ a $t_{0} > 0$ such that $x_{2}(t) \neq 0$ for all $t \geq t_{0}$ and $x_{1}$ has arbitrarily large zeros in the interval $[t_{0}, \infty)$. Let us say $t_{1}$, $t_{2}$ are two zeros of $x_{1}$ in $[t_{0}, \infty)$ such that $t_{2}-t_{1} > t_{0}$. 
\\
Since $x_{2}$ is non-oscillating, $sign \ x_{2}(t) = constant$ for all $t \geq t_{0}$. Now setting $i = 1$ in \eqref{inequality1 for simp osc system} and integrating the inequality from $t_{1}$ to $t$ for $n_{1}$ times, we get,
$$\left[x_{1}(t) - x_{1}(t_{1}) - \sum_{j = 1}^{n_{1}-1}x_{1}^{(j)}(t_{1})\frac{(t-t_{1})^j}{j!}\right]sign \ x_{2}(t) \geq \int_{t_{1}}^{t}\frac{(s-t_{1})^{n_{1}-1}}{(n_{1}-1)!}a_{1}(s)\left|x_{2}(s)\right|^{\lambda_{1}}ds.$$ 
Substituting $t = t_{2}$ in the above inequality and using the fact that $t_{2}$ also is a zero of $x_{1}$ this inequality reduces to
$$-\left[\sum_{j = 1}^{n_{1}-1}x_{1}^{(j)}(t_{1})\frac{(t_{2}-t_{1})^j}{j!}\right]sign \ x_{2}(t) \geq \int_{t_{1}}^{t_{2}}\frac{(s-t_{1})^{n_{1}-1}}{(n_{1}-1)!}a_{1}(s)\left|x_{2}(s)\right|^{\lambda_{1}}ds.$$ 
But we know that when $t_{0}$ is chosen very large (beyond the last root of \newline $\sum_{j = 1}^{n_{1}-1}x_{1}^{(j)}(t_{1})\frac{(t-t_{1})^j}{j!}$), $sign \ \left[\sum_{j = 1}^{n_{1}-1}x_{1}^{(j)}(t_{1})\frac{(t_{2}-t_{1})^j}{j!}\right] = sign \ x_{1}^{(n_{1}-1)}(t_{1})$.
\\
Since $x_{2}$ doesn't change sign, $f_{1}$ doesn't change sign too for $t \geq t_{0}$. Therefore $x_{1}^{(n_{1})}$ doesn't change sign for $t \geq t_{0}$. Therefore $x_{1}^{(n_{1-1})}(t)$ is monotone for $t \geq t_{0}$.
\\
Since, 
\\
$\left[x_{1}^{(n_{1-1})}(t) - x_{1}^{(n_{1-1})}(t_{1})\right] sign \ x_{2}(t) \geq \int_{t_{1}}^{t}a_{1}(s)\left|x_{2}(s)\right|^{\lambda_{1}}ds > 0$  and 
$x_{1}^{(n_{1-1})}(t)$ is monotone for $t \geq t_{0}$,
we conclude that, 
\\
$sign \ x_{1}^{(n-1)}(t_{1}) = sign \ \left[x_{1}^{(n_{1-1})}(t) - x_{1}^{(n_{1-1})}(t_{1})\right] = sign \ x_{2}(t)$ for $t \geq t_{0}$.
\\
Therefore the inequality reduces to 
$$-\left|\sum_{j = 1}^{n_{1}-1}x_{1}^{(j)}(t_{1})\frac{(t_{2}-t_{1})^j}{j!}\right| \geq \int_{t_{1}}^{t_{2}}\frac{(s-t_{1})^{n_{1}-1}}{(n_{1}-1)!}a_{1}(s)\left|x_{2}(s)\right|^{\lambda_{1}}ds > 0,$$
which is a contradiction. Therefore any weakly oscillating solution pair of the system \eqref{simp osc system} is an oscillating solution pair.
\\
So, all proper solutions oscillate. Thus the theorem is proved.
\end{proof}
In the theorem that was just proved, we gave sufficient conditions for all proper solutions of \eqref{simp osc system} to oscillate. The question of what extra conditions need to be imposed on $f_{i}$s for the same set of conditions to be necessary also gets answered in the next theorem.
\begin{theorem} \label{theorem2 for simp osc system}
Suppose that in the domain $D$, the inequalities
\begin{equation}
a_{i}\left(t\right)\left|x_{3-i}\right|^{\lambda_{i}} \leq \left(-1\right)^{i-1}f_{i}\left(t, x_{3-i}\right)\cdot sign \  x_{3-i} \leq Ma_{i}\left(t\right)\left|x_{3-i}\right|^{\lambda_{i}}
\label{condition1.1 for simp osc system}
\end{equation}
for $i = 1,2$ are satisfied, where  $\lambda_{i} > 0$ for $i = 1, 2$, $\lambda_{1}\cdot \lambda_{2} > 1$ and $M \geq 1$. The functions $a_{i}\left(t\right) \geq 0$ for $i = 1, 2$ are summable on each finite segment of $\left[0, \infty\right)$ and for some $k \in \left\{1, 2\right\}$ condition \eqref{condition3 for simp osc system} is satisfied. Then for all proper solutions of system \eqref{simp osc system} to oscillate it is necessary and sufficient that conditions \eqref{condition5 for simp osc system} and \eqref{condition6 for simp osc system} be satisfied.
\end{theorem}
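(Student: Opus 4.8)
\medskip
\noindent\textbf{Proof strategy (proposal).}

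The plan is to read off sufficiency from the previous theorem and to spend all the effort on necessity, which is where the two--sided bound \eqref{condition1.1 for simp osc system} earns its keep. Since \eqref{condition1.1 for simp osc system} contains the lower bound \eqref{condition1 for simp osc system}, once \eqref{condition5 for simp osc system} and \eqref{condition6 for simp osc system} hold the ``\eqref{condition5 for simp osc system} and \eqref{condition6 for simp osc system}'' branch of Theorem \ref{theorem1 for simp osc system} immediately gives oscillation of every proper solution, so sufficiency is free. The whole content is the converse: if every proper solution oscillates then \eqref{condition6 for simp osc system} must hold. I would prove the contrapositive by \emph{constructing} a non--oscillating proper solution whenever \eqref{condition6 for simp osc system} fails; in that regime \eqref{condition5 for simp osc system} holds automatically, because if \eqref{condition5 for simp osc system} failed the tail $\int_t^\infty (\tau-t)^{n_{3-k}-1}a_{3-k}(\tau)\,d\tau$ would diverge for every $t$ and drag \eqref{condition6 for simp osc system} into divergence. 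Throughout I take $k=1$, the case $k=2$ being symmetric.

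Set $b(t)=\int_t^\infty \frac{(\tau-t)^{n_2-1}}{(n_2-1)!}a_2(\tau)\,d\tau$, which is finite by \eqref{condition5 for simp osc system} and decreases to $0$. I would seek a solution on a half--line $[T,\infty)$ with a definite sign pattern: normalize $x_1>0$ with $x_1(t)\to c_1>0$, so that \eqref{condition1.1 for simp osc system} forces a fixed sign $\varepsilon=\pm1$ on $x_2$ and fixes the signs of $x_i^{(n_i)}$. Writing each $x_i$ through its $n_i$--fold integral (Taylor) remainder taken from $t$ to $\infty$ --- with the free constant $c_1$ carried on the $x_1$ side and all surviving derivatives sent to $0$ at $\infty$ --- turns \eqref{simp osc system} into a fixed--point equation $(x_1,x_2)=S(x_1,x_2)$. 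The natural domain is
\[
X=\Big\{(u_1,u_2)\in B([T,\infty))^2:\ \tfrac{c_1}{2}\le u_1(t)\le 2c_1,\ 0\le \varepsilon u_2(t)\le \gamma\, b(t)\ \ \forall\,t\ge T\Big\},
\]
which is closed, convex and bounded, with $\gamma=M(2c_1)^{\lambda_2}$.

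The heart of the matter is to check $S(X)\subseteq X$ and the compactness of $S(X)$. On the $x_2$--component, $|f_2|\le Ma_2|x_1|^{\lambda_2}\le M(2c_1)^{\lambda_2}a_2$ on $X$, so $|x_2(t)|\le \gamma\, b(t)$; this is exactly where \eqref{condition5 for simp osc system} is used, and the matching lower half of \eqref{condition1.1 for simp osc system} gives $|x_2(t)|\ge (c_1/2)^{\lambda_2}b(t)>0$. On the $x_1$--component, $|f_1|\le Ma_1|x_2|^{\lambda_1}\le M\gamma^{\lambda_1}a_1 b^{\lambda_1}$ on $X$, whence
\[
\big|x_1(t)-c_1\big|\le M\gamma^{\lambda_1}\int_t^\infty \frac{(\tau-t)^{n_1-1}}{(n_1-1)!}a_1(\tau)\,b(\tau)^{\lambda_1}\,d\tau ,
\]
and the integral on the right is precisely the tail of \eqref{condition6 for simp osc system}; since \eqref{condition6 for simp osc system} is assumed to fail this tail is finite and can be made $<c_1/2$ by choosing $T$ large, keeping $x_1$ in $[c_1/2,2c_1]$. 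Continuity of $S$ follows from the Carath\'eodory hypotheses on $f_i$ together with dominated convergence, and relative compactness of $S(X)$ follows from the compactness criterion: $X$ is uniformly bounded, the integrable dominating kernels $a_1 b^{\lambda_1}$ and $a_2$ provide a common modulus of continuity (equicontinuity) and force the two components to converge uniformly to $c_1$ and $0$ (equiconvergence at $\infty$). Schauder's theorem then yields a fixed point, which is a proper solution of \eqref{simp osc system} with $x_1\ge c_1/2>0$ and $\varepsilon x_2\ge (c_1/2)^{\lambda_2}b(t)>0$ on $[T,\infty)$, hence non--oscillating --- the desired contradiction.

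I expect the main obstacle to be the self--mapping step together with the bookkeeping of signs and boundary terms: the parities of $n_1,n_2$ decide whether the constructed pair lies in one orthant or in opposite ones, and one must orient $X$ accordingly so that it remains convex while simultaneously matching the invariance inequality to the exact form of \eqref{condition6 for simp osc system}. The hypothesis $\lambda_1\lambda_2>1$ enters here as the reason the feedback $x_1\mapsto x_2\mapsto x_1$ stays contractive enough to keep the small solution inside $X$ rather than amplifying it, mirroring its role at the end of the proof of Theorem \ref{theorem1 for simp osc system}. A secondary point worth isolating is the degenerate case $b\equiv 0$ near infinity (that is, $a_2$ eventually null), where the lower bound on $x_2$ is vacuous; there the system essentially decouples and a non--oscillating solution is produced directly.
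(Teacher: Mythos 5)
Your proposal follows essentially the same route as the paper: sufficiency is quoted from Theorem \ref{theorem1 for simp osc system}, and necessity is proved contrapositively by assuming \eqref{condition5 for simp osc system} holds and \eqref{condition6 for simp osc system} fails, then building a non-oscillating solution via Schauder's fixed point theorem applied to the tail-integral operator on a closed convex bounded subset of $B([T,\infty))$, with \eqref{condition5 for simp osc system} giving finiteness of the inner integral, the failure of \eqref{condition6 for simp osc system} (plus a large $T$ / small-constant choice) giving the self-mapping bound, dominated convergence giving continuity, and the equicontinuity--equiconvergence criterion giving relative compactness. The only differences are cosmetic — the paper runs the fixed point on $x_1$ alone over the ball $\|x_1\|\le 2K_1$ with $x_2$ defined as a functional of $x_1$, while you use a two-component sign-constrained set, and your explicit observation that the failure of \eqref{condition6 for simp osc system} forces \eqref{condition5 for simp osc system} is actually slightly more careful than the paper's silent reduction to that case.
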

\begin{proof}
The sufficiency follows from the previous theorem. So we prove the necessity. \\
For definiteness we assume $k = 1$.
\\
Using contrapositivity, we must show that if 
\begin{equation}
\int_{0}^{\infty}\frac{t^{n_{2}}}{\left(n_{2}-1\right)!}a_{2}\left(t\right)dt < \infty
\label{finiteness1 for simp osc system}
\end{equation}
and
\begin{equation}
\int_{0}^{\infty}\frac{t^{n_{1}}}{\left(n_{1}-1\right)!}a_{1}\left(t\right)\left[\int_{t}^{\infty}\frac{\left(\tau-t\right)^{n_{2}-1}}{\left(n_{2}-1\right)!}a_{2}\left(\tau\right)d\tau\right]^{\lambda_{1}}dt = P < \infty,
\label{finiteness2 for simp osc system}
\end{equation}
then the system \eqref{simp osc system} has a solution which is non oscillating. We show this by 	using Schauder's fixed point theorem. \\
Let us choose $T > 0$ such that, 
\begin{equation}
2^{\lambda_{1}\lambda_{2}}M\int_{T}^{\infty}\frac{(s-T)^{n_{1}-1}}{(n_{1}-1)!}a_{1}(s)\left[\int_{s}^{\infty}\frac{\left(r-s\right)^{n_{2}-1}}{(n_{2}-1)!}a_{2}(r)K_{1}^{\lambda_{2}}dr\right]^{\lambda_{1}}ds < K_{1}
\label{conditionforfixedpoint for simp osc system}
\end{equation}
for some $K_{1} > 0$. Clearly \eqref{finiteness2 for simp osc system} guarantees the existence of such a $K_{1}$, by fixing $K_{1}^{1-\lambda_{1}\lambda_{2}} = PM$.
It is definitely possible now to chose a $T^{'} > T$ such that, 
\begin{equation}
2^{\lambda_{1}\lambda_{2}}M\int_{T^{'}}^{\infty}\frac{(s-T^{'})^{n_{1}-1}}{(n_{1}-1)!}a_{1}(s)\left[\int_{s}^{\infty}\frac{\left(r-s\right)^{n_{2}-1}}{(n_{2}-1)!}a_{2}(r)K_{1}^{\lambda_{2}}dr\right]^{\lambda_{1}}ds < \epsilon
\label{2conditionforfixedpoint for simp osc system}
\end{equation}
for every $t \geq T^{'}$ and some $\epsilon > 0$.
\\
Consider the Banach Space $E = B\left(\left[T,\infty\right)\right)$ with the sup-norm $\left\|.\right\|$, and define \\
\begin{equation}\
 X_{1} = \left\{x_{1}\in E: \left\|x_{1}\right\| \leq 2K_{1}\right\}.
\nonumber 
 \end{equation}
 Clearly $X_{1}$ is a non-empty, closed, convex subset of $E$.\\
Let $x_{1}$ be an arbitrary function belonging to $X_{1}$.
We see that for every $t \geq T$, we have
\begin{equation}
 \left|\int_{t}^{\infty}\frac{(s-t)^{n_{1}-1}}{(n_{1}-1)!}f_{1}\left(s,x_{2}(s)\right)ds \right|
 \nonumber  
 \end{equation}
 \begin{eqnarray}
&\leq & \int_{t}^{\infty}\frac{(s-t)^{n_{1}-1}}{(n_{1}-1)!}\left|f_{1}\left(s,x_{2}(s)\right)\right|ds \nonumber
\\
&\leq & M\int_{T}^{\infty}\frac{(s-t)^{n_{1}-1}}{(n_{1}-1)!}\left|a_{1}\left(s\right)\right|\left|x_{2}\left(s\right)\right|^{\lambda_{1}}ds
\nonumber
\\
&\leq & M\int_{T}^{\infty}\frac{(s-t)^{n_{1}-1}}{(n_{1}-1)!}\left|a_{1}\left(s\right)\right|\left[\int_{s}^{\infty}\frac{(r-t)^{n_{2}-1}}{(n_{2}-1)!}\left|f_{2}\left(r,x_{1}(r)\right)\right|dr\right]^{\lambda_{1}}ds
\nonumber
\\
&\leq & M\int_{T}^{\infty}\frac{(s-t)^{n_{1}-1}}{(n_{1}-1)!}\left|a_{1}\left(s\right)\right|\left[\int_{s}^{\infty}\frac{(r-t)^{n_{2}-1}}{(n_{2}-1)!}\left|a_{2}\left(r\right)\right|\left|x_{1}\left(r\right)\right|^{\lambda_{2}}dr\right]^{\lambda_{1}}ds
\nonumber
\\
&\leq & 2^{\lambda_{1}\lambda_{2}}M\int_{T}^{\infty}\frac{(s-t)^{n_{1}-1}}{(n_{1}-1)!}a_{1}(s)\left[\int_{s}^{\infty}\frac{\left(r-s\right)^{n_{2}-1}}{(n_{2}-1)!}a_{2}(r)K_{1}^{\lambda_{2}}dr\right]^{\lambda_{1}}ds < K_{1}.
\nonumber
\\
\label{withink1 for simp osc system}
\end{eqnarray}
We now define mapping $S_{1}$ as
\begin{equation}
\left(S_{1}x_{1}\right)(t) = K_{1} + (-1)^{n_{1}}\int_{t}^{\infty}\frac{(s-t)^{n_{1}-1}}{(n_{1}-1)!}f_{1}\left(s,x_{2}(s)\right)ds  
\nonumber
\end{equation}
 with  
\begin{equation}
x_{2}(s) = (-1)^{n_{2}}\int_{s}^{\infty}\frac{(r-t)^{n_{2}-1}}{(n_{2}-1)!}f_{2}\left(r,x_{1}(r)\right)dr
\end{equation}
for every $t \geq T$.\\
In view of \eqref{withink1 for simp osc system}, we clearly see that $S_{1}$ maps $X_{1}$ to itself and is valid.\\
Since $S_{1}X_{1} \subseteq X_{1}$ and $X_{1}$ is uniformly bounded, $S_{1}X_{1}$ is uniformly bounded. Moreover for $t \geq T^{'}$ we have \begin{equation}
\left|S_{1}x_{1}(t)-K_{1}\right| = \left|(-1)^{n_{1}}\int_{t}^{\infty}\frac{(s-t)^{n_{1}-1}}{(n_{1}-1)!}f_{1}\left(s, x_{2}(s)\right)ds\right|
\nonumber
\end{equation}
 \begin{eqnarray}
 &\leq & \int_{t}^{\infty}\frac{(s-t)^{n_{1}-1}}{(n_{1}-1)!}\left|f_{1}\left(s,x_{2}(s)\right)\right|ds \nonumber
\\
&\leq & M\int_{t}^{\infty}\frac{(s-t)^{n_{1}-1}}{(n_{1}-1)!}\left|a_{1}\left(s\right)\right|\left|x_{2}\left(s\right)\right|^{\lambda_{1}}ds
\nonumber
\\
&\leq & M\int_{t}^{\infty}\frac{(s-t)^{n_{1}-1}}{(n_{1}-1)!}\left|a_{1}\left(s\right)\right|\left[\int_{s}^{\infty}\frac{(r-t)^{n_{2}-1}}{(n_{2}-1)!}\left|f_{2}\left(r,x_{1}(r)\right)\right|dr\right]^{\lambda_{1}}ds
\nonumber
\\
&\leq & M\int_{t}^{\infty}\frac{(s-t)^{n_{1}-1}}{(n_{1}-1)!}\left|a_{1}\left(s\right)\right|\left[\int_{s}^{\infty}\frac{(r-t)^{n_{2}-1}}{(n_{2}-1)!}\left|a_{2}\left(r\right)\right|\left|x_{1}\left(r\right)\right|^{\lambda_{2}}dr\right]^{\lambda_{1}}ds
\nonumber
\\
&\leq & 2^{\lambda_{1}\lambda_{2}}M\int_{t}^{\infty}\frac{(s-t)^{n_{1}-1}}{(n_{1}-1)!}a_{1}(s)\left[\int_{s}^{\infty}\frac{\left(r-s\right)^{n_{2}-1}}{(n_{2}-1)!}a_{2}(r)K_{1}^{\lambda_{2}}dr\right]^{\lambda_{1}}ds.
 \end{eqnarray}
Now, by using \eqref{2conditionforfixedpoint for simp osc system} and choosing a suitable $T^{'}$, we conclude that $S_{1}X_{1}$ is equiconvergent at $\infty$.\\
Now, for any $x_{1} \in X_{1}$ and every $t_{2} > t_{1} \geq T^{'}$, we see 
\begin{eqnarray}
 \left| K_{1} + (-1)^{n_{1}}\int_{t_{2}}^{\infty}\frac{(s-t_{2})^{n_{1}-1}}{(n_{1}-1)!}f_{1}\left(s,x_{2}(s)\right)ds
- K_{1} - (-1)^{n_{1}}\int_{t_{1}}^{\infty}\frac{(s-t_{1})^{n_{1}-1}}{(n_{1}-1)!}f_{1}\left(s,x_{2}(s)\right)ds  
 \right|
\nonumber
\end{eqnarray}
\begin{eqnarray}
= \left|\int_{t_{2}}^{\infty}\left[\int_{r}^{\infty}\frac{(s-r)^{n_{1}-2}}{(n_{1}-2)!}f_{1}\left(s,x_{2}(s)\right)ds\right]dr
- \int_{t_{1}}^{\infty}\left[\int_{r}^{\infty}\frac{(s-r)^{n_{1}-2}}{(n_{1}-2)!}f_{1}\left(s,x_{2}(s)\right)ds\right]dr \right| \nonumber
\end{eqnarray}
\begin{eqnarray}
& =& \left|-\int_{t_{1}}^{t_{2}}\left[\int_{r}^{\infty}\frac{(s-r)^{n_{1}-2}}{(n_{1}-2)!}f_{1}\left(s,x_{2}(s)\right)ds\right]dr \right|
\nonumber
\\
&\leq &  \int_{t_{1}}^{t_{2}}\left[\int_{r}^{\infty}\frac{(s-r)^{n_{1}-2}}{(n_{1}-2)!}\left|f_{1}\left(s,x_{2}(s)\right)\right|ds\right]dr
\nonumber
\\
&\leq &  \int_{t_{1}}^{t_{2}}\left[ M\int_{t}^{\infty}\frac{(s-t)^{n_{1}-1}}{(n_{1}-1)!}\left|a_{1}\left(s\right)\right|\left|x_{2}\left(s\right)\right|^{\lambda_{1}}ds\right]dr
\nonumber
\\
&\leq &  \int_{t_{1}}^{t_{2}}\left[M\int_{t}^{\infty}\frac{(s-t)^{n_{1}-1}}{(n_{1}-1)!}\left|a_{1}\left(s\right)\right|\left[\int_{s}^{\infty}\frac{(r-t)^{n_{2}-1}}{(n_{2}-1)!}\left|f_{2}\left(r,x_{1}(r)\right)\right|dr\right]^{\lambda_{1}}ds\right]dr
\nonumber
\\
&\leq &  \int_{t_{1}}^{t_{2}}\left[M\int_{t}^{\infty}\frac{(s-t)^{n_{1}-1}}{(n_{1}-1)!}\left|a_{1}\left(s\right)\right|\left[\int_{s}^{\infty}\frac{(r-t)^{n_{2}-1}}{(n_{2}-1)!}\left|a_{2}\left(r\right)\right|\left|x_{1}\left(r\right)\right|^{\lambda_{2}}dr\right]^{\lambda_{1}}ds\right]dr
\nonumber
\\
&\leq & \int_{t_{1}}^{t_{2}}\left[2^{\lambda_{1}\lambda_{2}}M\int_{t}^{\infty}\frac{(s-t)^{n_{1}-1}}{(n_{1}-1)!}a_{1}(s)\left[\int_{s}^{\infty}\frac{\left(r-s\right)^{n_{2}-1}}{(n_{2}-1)!}a_{2}(r)K_{1}^{\lambda_{2}}dr\right]^{\lambda_{1}}ds\right]dr.
\nonumber
\end{eqnarray}
 By using \eqref{conditionforfixedpoint for simp osc system} we  have a bound on the right hand side of the above inequality and this bound is dependent on the width $\left|t_{2}-t_{1}\right|$. By suitably choosing $t_{1}$ and $t_{2}$ such that $\left|t_{2}-t_{1}\right| < \delta = \frac{\epsilon}{K_{1}}$, $S_{1}X_{1}$ is shown to be equicontinuous. So by the given compactness criterion, we conclude that $S_{1}$ is relatively compact.
\\
Now we show that the mapping $S_{1}$ is continuous. Let $x_{1v}$ be an arbitrary sequence in $X_{1}$, converging to $x_{1}$ under the norm defined before. From \eqref{condition1.1 for simp osc system} we have 
\begin{equation}
\left|f_{1}\left(t,x_{1v}(t)\right)\right| \leq Ma_{1}(t)\left|K_{1}\right|^{\lambda_{1}}  
\nonumber
\end{equation}
for every $t \geq T$ and for all $v$.\\

Now, by applying the Lebesgue's dominated convergence theorem we get
\begin{eqnarray}
lim_{v \rightarrow \infty} (-1)^{n_{1}}\int_{t}^{\infty} \frac{(s-t)^{n_{1}-1}}{(n_{1}-1)!}f_{1}\left(s, \int_{s}^{\infty}\frac{(r-t)^{n_{2}-1}}{(n_{2}-1)!}f_{2}\left(r,x_{1v}(r)\right)dr\right)ds \nonumber \\
= (-1)^{n_{1}}\int_{t}^{\infty}\frac{(s-t)^{n_{1}-1}}{(n_{1}-1)!}f_{1}\left(s, \int_{s}^{\infty}\frac{(r-t)^{n_{2}-1}}{(n_{2}-1)!}f_{2}\left(r,x_{1}(r)\right)dr\right)ds. \nonumber
\end{eqnarray}
So we showed the pointwise convergence i.e
\begin{eqnarray}
\lim_{v \rightarrow \infty}\left(S_{1}x_{1v}\right)(t) = \left(S_{1}x_{1}\right)(t).
\nonumber
\end{eqnarray}
Now, consider an arbitrary subsequence $u_{\mu}$ of $S_{1}x_{1v}$. Since $S_{1}Y$ is relatively compact, there exists a subsequence $v_{\lambda}$ of $u_{\mu}$ and a $v$ in $E$ such that $v_{\lambda}$ converges uniformly to $v$. So
\begin{equation}
\lim_{v \rightarrow \infty}\left(S_{1}x_{1v}\right)(t) = \left(S_{1}x_{1}\right)(t) = v
\nonumber
\end{equation}
for all $t \geq T$ under the sup-norm.
So $S_{1}$ is continuous.\\
Thus we showed that $S_{1}$ satisfies all the assumptions of Schauder's theorem, So $S_{1}$ has a fixed point $x_{1} \in X_{1}$ such that $S_{1}x_{1} = x_{1}$. That implies
\begin{equation}
x_{1}(t) = K_{1} + (-1)^{n_{1}}\int_{t}^{\infty}\frac{(s-t)^{n_{1}-1}}{(n_{1}-1)!}f_{1}\left(s,x_{2}(s)\right)ds
\nonumber
\end{equation}
where 
\begin{equation}
x_{2}(s) = (-1)^{n_{2}}\int_{s}^{\infty}\frac{(r-t)^{n_{2}-1}}{(n_{2}-1)!}f_{2}\left(r,x_{1}(r)\right)dr
\nonumber
\end{equation}
for all $t \geq T$.
\\
After differentiating the above equations $n_{1}$ times, we see that 

\begin{equation}x_{1}^{(n_{1})}(t) = f_{1}\left(t,x_{2}(t)\right)$$
$$x_{2}^{(n_{2})}(t) = f_{2}\left(t,x_{1}(t)\right)
\nonumber
\end{equation}
for all $t \geq T$.\\
Also we observe from the integral equations that  $x_{1} \rightarrow K_{1}$, $x_{2} \rightarrow 0$ as $t \rightarrow \infty$.
\\
This also means that the solutions will not have large zeros, so the solution is non-oscillatory.

\end{proof}
\begin{example}
Consider the system of differential equations
\begin{eqnarray}
x_{1}^{''} &=& a_{1}(t)\left|x_{2}\right|^{2}sign x_{2}, \nonumber \\
x_{2}^{''} &=& -a_{2}(t)\left|x_{1}\right|^{3}sign x_{1}.
\label{example1 for simp osc system}
\end{eqnarray}
Assuming that $a_{1}(t) = t^{2}$ and $a_{2}(t) = t^{4}$, we see that they satisfy  the following inequalities
\begin{eqnarray}
\int_{0}^{\infty}t^{2}a_{1}(t)dt & = & \int_{0}^{\infty}t^{4}dt = \infty,
\nonumber
\\
\int_{0}^{\infty}t^{2}a_{2}(t)dt & = & \int_{0}^{\infty}t^{6}dt = \infty.
\nonumber
\end{eqnarray}
So we see that $a_{i}$ here satisfy   conditions \eqref{condition3 for simp osc system} and \eqref{condition4 for simp osc system}.
Therefore by applying Theorem \ref{theorem1 for simp osc system}, we note that all solutions of the system \eqref{example1 for simp osc system} are oscillatory.
\\
Let us now consider the case where $a_{1}(t) = t^{2}e^{2t}$ and $a_{2}(t) = e^{-t} $. We note here that $a_{1}$ satisfies \eqref{condition3 for simp osc system} for $k =1$ and $a_{2}$ doesn't satisfy \eqref{condition4 for simp osc system} for $k = 1$ since, $\int_{0}^{\infty}t^{2}e^{-t}dt = 2 < \infty$. But we observe that $\int_{0}^{\infty}t^{4}e^{2t}\left(t+1\right)^2e^{-2t}dt$ diverges, which means that condition \eqref{condition6 for simp osc system} for $k = 1$ is satisfied. Therefore all solutions of the system shall oscillate.

\end{example}
\section{Acknowledgement and Dedicattion}
We dedicate this work to the Founder Chancellor of Sri Sathya Sai Institute of Higher Learning, Bhagawan Sri Sathya Sai Baba. 
\\
{\bf{Conflict of Interest:}} The authors declare that they have no conflict of interest.
\\
{\bf{Ethical Standards:}} This article is in compliance with all ethical standards.
\\
Ethical approval and informed consent do not apply to this article.
\\
{\bf{Data Availability Statement:}} Data availability statement doesnt apply to this article as no data is used in the work.

\end{document}